\newenvironment{proof}{{\bf Proof.}}{\par\hspace{25em}\rule{1ex}{1ex}\par}
\newtheorem{theorem}{Theorem}[section]
\newtheorem{lemma}{Lemma}[section]
\title{Caustics of wave fronts reflected by a surface\footnote{Journal of Mathematical Sciences and Modeling,  2018, V 1, Issue 2, Pages 131 - 137.  https://doi.org/10.33187/jmsm.431543} }
\author{Alexander Yampolsky ${ }^{1}$ and Oleksandr Fursenko ${ }^{2 }$}
\date{}
\begin{document}
\maketitle
\begin{center}
\noindent ${ }^1$  V.N. Karazin Kharkiv National University, Kharkiv, Ukraine\\
${ }^2$  I. Kozhedub National Air Force University, Kharkiv, Ukraine\\[1ex]
\end{center}
*Corresponding author E-mail: a.yampolsky@karazin.ua, \\
https: //orcid.org/0000-0002-7215-3669\\[1ex]
\noindent Keywords: Caustic, Reflected wave front\\
2010 AMS: 53A25, 53B20, 53B25

\begin{abstract}
One can often see caustic by reflection in nature but it is rather hard to understand the way of how caustic arise and which geometric properties of a mirror surface define geometry of the caustic. The caustic by reflection has complicated topology and much more complicated geometry. From engineering point of view the geometry of caustic by reflection is important for antenna's theory because it can be considered as a surface of concentration of the reflected wave front. In this paper we give purely geometric description of the caustics of wave front (flat or spherical) after reflection from mirror surface. The description clarifies the dependence of caustic on geometrical characteristics of a surface and allows rather simple and fast computer visualization of the caustics in dependence of location of the rays source or direction of the pencil of parallel rays.
\end{abstract}

\section*{Introduction}
The caustic of reflected wave front is the envelope of the family of rays emitted from a given point $O$ (source) and reflected by smooth surface $S$ (mirror). This type of caustics are called by catacaustics or caustics by reflection. There are a number of research on caustics and catacaustics. Mostly, the authors were interested in topology of the caustic \cite{1,3}.  Nice research concerning location of the catacaustic inside compact convex body one can find in \cite{2}. The catacaustics of a canal surfaces were studied in \cite{4}. The caustics of a translation surfaces were considered in \cite{6}. The caustics on spheres and cylinders of revolution were studied in \cite{7}. A survey on the theory of caustics and wave front propagations with applications to geometry one can found in \cite{5}. The catacaustics from engineering viewpoint in connection with antenna's technologies have been studied in \cite{8}. The research presented in the mentioned above papers do not use the parametric equation of the caustic.

In this paper, we show that to find the caustic of reflected rays of a surface we should consider a "virtual deformation" of the mirror and its second fundamental form under influence of the incident pencil of rays. This approach allowed to give an explicit parameterization of the caustic of the reflected front in geometric terms of the mirror surface (Theorem \textbf{1.1}, Theorem \textbf{2.1}).

\section{ Caustics by reflected in case of flat incident front}
Let us be given a regular surface $S$ parameterized by vector-function $\mathbf{r}: D^{2}\left(u^{1}, u^{2}\right) \rightarrow S \subset E^{3}$. Let $\mathbf{n}$ be the unit normal vector field of S oriented in such a way that the support function is positive (the origin is in positive half-space with respect to each tangent plane). Suppose $\mathbf{a}(|\mathbf{a}|=1)$ is a unit normal to a flat front incident on the surface and such that the dot-product $(\mathbf{a}, \mathbf{n})<0$. If $\mathbf{b}$ is the direction of reflected ray, then
\begin{equation*}
\mathbf{b}=\mathbf{a}-2(\mathbf{a}, \mathbf{n}) \mathbf{n} \tag{1.1}
\end{equation*}
We also suppose that at initial time $t=0$ the incident front $F_{0}$ passes through the origin. Let $L$ be the distance that each "photon" of $F_{0}$ passes along the ray in direction of $\mathbf{a}$. The front $F_{t}$ reaches the surface $S$ if $L \geq(\mathbf{r}, \mathbf{a})$. Denote by $\lambda$ the distance that each "photon" of the front $F_{t}$ passes after reflection. Then evidently $(\mathbf{r}, \mathbf{a})+\lambda=L$ and hence
\begin{equation*}
\lambda=L-(\mathbf{r}, \mathbf{a}) \tag{1.2}
\end{equation*}
Therefore, the parametric equation of the reflected front is
\begin{equation*}
\boldsymbol\rho=\mathbf{r}+\lambda \cdot \mathbf{b} \quad(\lambda \geq 0) \tag{1.3}
\end{equation*}
The following assertion simplifies calculations.
\begin{lemma} Denote by $g_{i j}$ and $B_{i j}$ the matrices of the first and the second fundamental forms of a regular surface $S \subset E^{3}$, respectively. $A$ caustic of the reflected front of the surface $S$ is the same as a focal set of a surface with the first and second fundamental forms given by
\begin{equation*}
g_{i j}^{*}=g_{i j}-\left(\partial_{j} \mathbf{r}, \mathbf{a}\right)\left(\partial_{i} \mathbf{r}, \mathbf{a}\right), \quad B_{i j}^{*}=-2 \cos \theta B_{i j} . \tag{1.4}
\end{equation*}
\end{lemma}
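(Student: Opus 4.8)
The plan is to obtain the caustic directly as the envelope (focal set) of the two-parameter family of reflected rays, and then to recognise the resulting focal condition as the characteristic equation of the pair of forms in (1.4). Parametrise the reflected rays by the surface coordinates $u^{1},u^{2}$ and by the signed distance $s$ measured along a ray from its point of reflection, so that a point of the ray is $\mathbf{c}(u^{1},u^{2},s)=\mathbf{r}+s\,\mathbf{b}$ with $\mathbf{b}$ as in (1.1); this is the same straight line as $\boldsymbol\rho+\mathrm{const}\cdot\mathbf{b}$, so the envelope is independent of the wave-front parametrisation $\lambda$ from (1.2). Since $\partial_{s}\mathbf{c}=\mathbf{b}$, the point $\mathbf{c}$ lies on the caustic exactly when the differential of $\mathbf{c}$ drops rank, i.e. when $\partial_{1}\mathbf{c}$, $\partial_{2}\mathbf{c}$, $\mathbf{b}$ are linearly dependent. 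The lemma follows once this condition is shown to be $\det\bigl(g^{*}_{ij}+sB^{*}_{ij}\bigr)=0$ and $\mathbf{r}+s\,\mathbf{b}$ is identified with the centre of curvature it prescribes; then $\mathbf{r}$, $\mathbf{b}$ and the forms (1.4) play the roles of position vector, unit normal and two fundamental forms of the ``virtual deformed mirror'' whose focal set is the caustic.

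Preparatory identities: since $\mathbf{a}$ is constant, $\partial_{i}\mathbf{b}=-2(\mathbf{a},\partial_{i}\mathbf{n})\mathbf{n}-2(\mathbf{a},\mathbf{n})\partial_{i}\mathbf{n}$, and the Weingarten equations $\partial_{i}\mathbf{n}=-B_{i}^{\,k}\partial_{k}\mathbf{r}$ turn this into $\partial_{i}\mathbf{b}=2(\mathbf{a},\mathbf{n})B_{i}^{\,k}\partial_{k}\mathbf{r}-2\beta_{i}\mathbf{n}$, where $c_{k}:=(\partial_{k}\mathbf{r},\mathbf{a})$ and $\beta_{i}:=(\mathbf{a},\partial_{i}\mathbf{n})=-B_{i}^{\,k}c_{k}$. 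In the frame $\{\partial_{1}\mathbf{r},\partial_{2}\mathbf{r},\mathbf{n}\}$ one has $(\partial_{i}\mathbf{r},\mathbf{b})=(\partial_{i}\mathbf{r},\mathbf{a})=c_{i}$, hence $\mathbf{a}=g^{kl}c_{l}\,\partial_{k}\mathbf{r}+(\mathbf{a},\mathbf{n})\mathbf{n}$ and $\mathbf{b}=g^{kl}c_{l}\,\partial_{k}\mathbf{r}-(\mathbf{a},\mathbf{n})\mathbf{n}$; and $|\mathbf{a}|=1$ gives the scalar identity $g^{ij}c_{i}c_{j}+(\mathbf{a},\mathbf{n})^{2}=1$, which is the decisive simplifier. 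Also $\partial_{i}\mathbf{c}=\bigl(\delta_{i}^{\,k}+2s(\mathbf{a},\mathbf{n})B_{i}^{\,k}\bigr)\partial_{k}\mathbf{r}-2s\beta_{i}\mathbf{n}$.

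Now write the dependence condition $\det(\partial_{1}\mathbf{c},\partial_{2}\mathbf{c},\mathbf{b})=0$ as a bordered $3\times 3$ determinant in this frame. Putting $B^{*}_{ij}=-2\cos\theta\,B_{ij}$ with $\cos\theta=-(\mathbf{a},\mathbf{n})$, its tangential $2\times2$ block is $\delta_{i}^{\,k}+s(B^{*})_{i}^{\,k}$, with determinant $\det(g+sB^{*})/\det g$; the border carries the $\beta_{i}$ and the tangential components of $\mathbf{a}$, and the corner entry is $(\mathbf{a},\mathbf{n})$. Expanding by the Schur complement of that block, then substituting $\beta_{i}=-B_{i}^{\,k}c_{k}$ and using $g^{ij}c_{i}c_{j}+(\mathbf{a},\mathbf{n})^{2}=1$, reduces the whole condition to $\sum_{i,j}c_{i}\bigl((g+sB^{*})^{-1}\bigr)^{ij}c_{j}=1$. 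The rank-one (Sherman--Morrison) determinant identity $\det(M-\mathbf{v}\mathbf{v}^{T})=\det M\,\bigl(1-\mathbf{v}^{T}M^{-1}\mathbf{v}\bigr)$, applied with $M=g+sB^{*}$ and $\mathbf{v}=(c_{1},c_{2})$, rewrites this as $\det\bigl(g_{ij}-c_{i}c_{j}+sB^{*}_{ij}\bigr)=\det\bigl(g^{*}_{ij}+sB^{*}_{ij}\bigr)=0$. This is exactly the characteristic equation of the pair $(g^{*}_{ij},B^{*}_{ij})$: its two roots $s_{1}(u),s_{2}(u)$ are, up to sign, the principal radii of curvature of a surface with first fundamental form $g^{*}_{ij}$ and second fundamental form $B^{*}_{ij}$, and the caustic points $\mathbf{r}+s_{\alpha}\mathbf{b}$ are the corresponding centres of curvature — i.e. the caustic is the focal set of that virtual surface. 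The degenerate rays, where $\det(g+sB^{*})=0$ or where $\det\bigl(\delta_{i}^{\,k}+2s(\mathbf{a},\mathbf{n})B_{i}^{\,k}\bigr)=0$ (a ray meeting $S$ at one of $S$'s own principal-curvature distances), enter by continuity since the caustic is closed.

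The effort is concentrated in the determinant manipulation of the last paragraph, and I expect its difficulty to be bookkeeping rather than conceptual: one must keep the raisings and lowerings by $g$ consistent so that the bordered $3\times3$ determinant really does collapse onto $\det(g^{*}+sB^{*})$ after the rank-one identity, and one must fix the orientation and sign conventions for $\mathbf{n}$, for $B_{ij}$ and for the incidence angle $\theta$ so that $B^{*}_{ij}$ comes out with the factor $-2\cos\theta$ exactly as in (1.4). One further point deserves mention: $(g^{*}_{ij},B^{*}_{ij})$ need not satisfy the Gauss--Codazzi equations, so ``a surface with these fundamental forms'' is to be read as the data $(\mathbf{r},\mathbf{b},g^{*},B^{*})$ entering the focal-set formula rather than necessarily a bona fide isometric immersion; all the proof uses is that the focal-set formula has this shape.
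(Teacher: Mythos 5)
Your argument is correct in substance but follows a genuinely different route from the paper's. The paper works with the reflected \emph{front} $\boldsymbol\rho=\mathbf{r}+\lambda\mathbf{b}$: it first checks that $\mathbf{b}$ is its unit normal, computes $g^{*}$ and $B^{*}$ for arbitrary $\lambda$, identifies the caustic with the focal surface of the front, and then observes that the striction condition $\left(\partial_{X}\mathbf{r},\partial_{X}\mathbf{b},\mathbf{b}\right)=0$ is independent of $\lambda$, so the focal set may be computed at the moment $\lambda=0$, where the forms collapse to (1.4). You instead bypass the front entirely and treat the caustic as the critical-value set of the ray-congruence map $(u^{1},u^{2},s)\mapsto\mathbf{r}+s\mathbf{b}$, reducing the rank-degeneration condition $\det\left(\partial_{1}\mathbf{c},\partial_{2}\mathbf{c},\mathbf{b}\right)=0$ by a Schur complement, the identity $g^{ij}c_{i}c_{j}+(\mathbf{a},\mathbf{n})^{2}=1$, and the rank-one determinant formula to $\det\left(g^{*}_{ij}-sB^{*}_{ij}\right)=0$; I have checked this chain and it does close up as claimed. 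Your version buys a cleaner logical structure --- it never needs the slightly delicate step of passing from ``the principal directions of the front are $\lambda$-independent'' to ``the focal set may be computed at $\lambda=0$'', and it produces the characteristic equation of the pair $(g^{*},B^{*})$ directly --- at the price of a heavier determinant computation; the paper's version buys, as a by-product, the fundamental forms of the reflected front for all $\lambda$ and the fact that $\mathbf{b}$ is its unit normal, which the examples implicitly use. One point you should actually fix rather than merely flag: you set $\cos\theta=-(\mathbf{a},\mathbf{n})$, whereas the paper's proof sets $\cos\theta=(\mathbf{a},\mathbf{n})$ (which is negative here), so your $B^{*}$ is the negative of the paper's; the two sign choices you make (this one, and writing $\det(g^{*}+sB^{*})$ rather than $\det(g^{*}-sB^{*})$) cancel in the final focal condition, but with your convention the caustic point comes out as $\mathbf{r}-\frac{1}{k^{*}}\mathbf{b}$ rather than $\mathbf{r}+\frac{1}{k^{*}}\mathbf{b}$ as used in Theorem 1.2, so the conventions must be aligned with the paper's before (1.4) is literally obtained.
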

\begin{proof}  Let
\begin{equation*}
\boldsymbol\rho=\mathbf{r}+\lambda \cdot \mathbf{b} \quad(\lambda \geq 0) \tag{1.5}
\end{equation*}
be a parametric equation of the reflected front, where $\mathbf{b}$ is given by (1.1). Let us show that $\mathbf{b}$ is a unit normal vector field of the reflected front. Indeed,
\begin{equation*}
\partial_{i} \boldsymbol\rho=\partial_{i} \mathbf{r}+\partial_{i} \lambda \cdot \mathbf{b}+\lambda \cdot \partial_{i} \mathbf{b} \tag{1.6}
\end{equation*}
and hence $\left(\partial_{i} \boldsymbol\rho, \mathbf{b}\right)=\left(\partial_{i} \mathbf{r}, \mathbf{b}\right)+\partial_{i} \lambda=\left(\partial_{i} \mathbf{r}, \mathbf{b}\right)-\left(\partial_{i} \mathbf{r}, \mathbf{a}\right)=0$. Let $g$ and $B$ are the first and the second fundamental forms of $S$. Denote by $g^{*}$ and $B^{*}$ the first and the second fundamental forms of the reflected front. Denote, in addition, $b_{i j}^{*}=\left(\partial_{i} \mathbf{b}, \partial_{j} \mathbf{b}\right)$ and $\cos \theta=(\mathbf{a}, \mathbf{n})$. Then
$$
g_{i j}^{*}=g_{i j}+4 \lambda \cos \theta B_{i j}+\lambda^{2} b_{i j}^{*}-\left(\partial_{i} \mathbf{r}, \mathbf{a}\right)\left(\partial_{j} \mathbf{r}, \mathbf{a}\right), \quad B_{i j}^{*}=-2 \cos \theta B_{i j}+\lambda b_{i j}^{*}
$$
To prove this, remark that $g_{i j}^{*}=\left(\partial_{i} \boldsymbol\rho, \partial_{j} \boldsymbol\rho\right)$. From (1.1) and (1.2) it follows that
$$
\partial_{i} \mathbf{b}=-2\left(\mathbf{a}, \partial_{i} \mathbf{n}\right) \mathbf{n}-2(\mathbf{a}, \mathbf{n}) \partial_{i} \mathbf{n}, \quad \partial_{i} \lambda=-\left(\partial_{i} \mathbf{r}, \mathbf{a}\right), \quad \partial_{i} \boldsymbol\rho=\partial_{i} \mathbf{r}-\left(\partial_{i} \mathbf{r}, \mathbf{a}\right) \mathbf{b}+\lambda \partial_{i} \mathbf{b}
$$
Thus we have
\begin{multline*}
 B_{i j}^{*}=-\left(\partial_{i} \boldsymbol\rho, \partial_{j} \mathbf{b}\right)=\\
 -\left(\partial_{i} \mathbf{r}+\partial_{i} \lambda \cdot \mathbf{b}+\lambda \cdot \partial_{i} \mathbf{b}, \partial_{j} \mathbf{b}\right)=
 -\left(\partial_{i} \mathbf{r}, \partial_{j} \mathbf{b}\right)-\lambda b_{i j}^{*}=
  -2(\mathbf{a}, \mathbf{n}) B_{i j}-\lambda b_{i j}^{*},
\end{multline*}
\begin{multline*}
 g_{i j}^{*}=\left(\partial_{i} \boldsymbol\rho, \partial_{j} \boldsymbol\rho\right)= \left(\partial_{i} \mathbf{r}-\left(\partial_{i} \mathbf{r}, \mathbf{a}\right) \mathbf{b}+\lambda \partial_{i} \mathbf{b}, \partial_{j} \rho\right)=
 \left(\partial_{i} \mathbf{r}, \partial_{j} \rho\right)+\lambda\left(\partial_{i} \mathbf{b}, \partial_{j} \boldsymbol\rho\right)=\\
  g_{i j}-\left(\partial_{j} \mathbf{r}, \mathbf{a}\right)\left(\partial_{i} \mathbf{r}, \mathbf{a}\right)+2 \lambda(\mathbf{a}, \mathbf{n}) B_{i j}-\lambda B_{i j}^{*}
\end{multline*}

The caustic of reflected front is nothing else but its focal surface. The latter is formed by striction lines of ruled surface generated by the unit normal vector field along lines of curvature of the reflected front. It is well known that Gaussian curvature of the latter ruled surface has to be zero. If $\mathbf{X}=X^{1} \partial_{1} \boldsymbol\rho+X^{2} \partial_{2} \boldsymbol\rho$ is tangent to line of curvature of the reflected front, then the condition on Gaussian curvature takes the form
\begin{equation*}
\left(\partial_{X} \boldsymbol\rho, \partial_{X} \mathbf{b}, \mathbf{b}\right)=\left(\partial_{X} \mathbf{r}-(X, \mathbf{a}) \mathbf{b}+\lambda \partial_{X} \mathbf{b}, \partial_{X} \mathbf{b}, \mathbf{b}\right)=\left(\partial_{X} \mathbf{r}, \partial_{X} \mathbf{b}, \mathbf{b}\right)=0 . \tag{1.7}
\end{equation*}

The condition (1.7) does not depend on $\lambda$ and is equivalent to the condition on $\mathbf{X}$ to be tangent to principal direction of the reflected front at the initial moment, when the reflected front and the mirror surface coincide pointwise, i.e. when $\lambda=0$. As a consequence, the caustic can be found as a focal surface of reflected front at the moment $\lambda=0$. Therefore, to find the caustic we can take
$$
g_{i j}^{*}=g_{i j}-\left(\partial_{j} \mathbf{r}, \mathbf{a}\right)\left(\partial_{i} \mathbf{r}, \mathbf{a}\right), \quad B_{i j}^{*}=-2 \cos \theta B_{i j},
$$
where $\cos \theta=(\mathbf{a}, \mathbf{n})$ is the angle function between the incident rays and the unit normal vector field of the surface.
\end{proof}

\begin{theorem} Let $S$ be a regular surface of non-zero Gaussian curvature parameterized by position-vector $\mathbf{r}$ and $\mathbf{n}$ be a the unit normal vector field of S. Denote by $\mathbf{a}(|\mathbf{a}|=1)$ a direction of incident rays. Then there exist two caustics of the reflected front and their parametric equations can be given by
$$
\boldsymbol\xi^{*}=\mathbf{r}+\frac{1}{k_{i}^{*}} \mathbf{b} \quad(i=1,2),
$$
where $\mathbf{b}=\mathbf{a}-2(\mathbf{a}, \mathbf{n}) \mathbf{n}$ is a direction of reflected rays and $k_{i}^{*}$ are the roots of the equation
$$
\left(k^{*}\right)^{2}+2 \cos \theta\left(2 H+k_{n}\left(a_{t}\right) \tan ^{2} \theta\right) k^{*}+4 K=0,
$$
where $H$ is the mean curvature, $K$ is the Gaussian curvature and $k_{n}\left(a_{t}\right)$ is the normal curvature of the surface in a direction of tangential projection of the incident rays.
\end{theorem}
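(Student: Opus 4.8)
The plan is to substitute the ``virtual'' fundamental forms of Lemma 1.1 into the classical formula for the focal set of a surface. Recall that for any immersed surface with position vector $\mathbf{r}$ and unit normal $\nu$, the two sheets of its focal set are $\mathbf{r}+\frac{1}{k}\nu$, where $k$ ranges over the roots of $\det(B_{ij}-k\,g_{ij})=0$, i.e. over the principal curvatures. By Lemma 1.1 the caustic of the reflected front coincides with the focal set of the surface carrying
\[
g_{ij}^{*}=g_{ij}-(\partial_i\mathbf{r},\mathbf{a})(\partial_j\mathbf{r},\mathbf{a}),\qquad B_{ij}^{*}=-2\cos\theta\,B_{ij},
\]
and, as shown in the proof of that lemma, the associated unit normal is $\mathbf{b}=\mathbf{a}-2(\mathbf{a},\mathbf{n})\mathbf{n}$. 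Hence the two caustic sheets are $\boldsymbol\xi^{*}=\mathbf{r}+\frac{1}{k_i^{*}}\mathbf{b}$, where $k_1^{*},k_2^{*}$ are the roots of $\det(B_{ij}^{*}-k^{*}g_{ij}^{*})=0$; this is well defined once we know $k_i^{*}\neq 0$, which will be visible from the constant term of that equation together with the hypothesis $K\neq0$. So the whole task reduces to computing $\det(B_{ij}^{*}-k^{*}g_{ij}^{*})=0$ and rewriting it in the stated form.

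Since the roots of $\det(B_{ij}^{*}-k^{*}g_{ij}^{*})=0$ are unchanged under a linear change of the tangent frame, I would carry out the computation in an orthonormal frame $e_1,e_2$ of principal directions of $S$. There $g_{ij}=\delta_{ij}$, $B_{ij}=\operatorname{diag}(k_1,k_2)$ with $k_1,k_2$ the principal curvatures of $S$, and the numbers $a_i:=(\partial_i\mathbf{r},\mathbf{a})=(e_i,\mathbf{a})$ are the components of the tangential part $a_t$ of $\mathbf{a}$; since $|\mathbf{a}|=1$ and $(\mathbf{a},\mathbf{n})=\cos\theta$, one has $a_1^{2}+a_2^{2}=|a_t|^{2}=\sin^{2}\theta$. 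Thus $g_{ij}^{*}=\delta_{ij}-a_ia_j$ and $B_{ij}^{*}=-2\cos\theta\,\operatorname{diag}(k_1,k_2)$, so $\det(B_{ij}^{*}-k^{*}g_{ij}^{*})=0$ is a single explicit $2\times2$ determinant, quadratic in $k^{*}$.

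Expanding it, I expect the coefficient of $(k^{*})^{2}$ to be $\det(g_{ij}^{*})=1-(a_1^{2}+a_2^{2})=\cos^{2}\theta$, the constant term to be $\det(B_{ij}^{*})=4\cos^{2}\theta\,k_1k_2=4\cos^{2}\theta\,K$, and the coefficient of $k^{*}$ to be $2\cos\theta\,(k_1+k_2-k_1a_2^{2}-k_2a_1^{2})$. The only step that is not pure bookkeeping is to recognize
\[
k_1a_1^{2}+k_2a_2^{2}=B(a_t,a_t)=k_n(a_t)\,|a_t|^{2}=k_n(a_t)\sin^{2}\theta,
\]
which, together with $k_1+k_2=2H$, gives
\[
k_1+k_2-(k_1a_2^{2}+k_2a_1^{2})=2H-(2H-k_n(a_t))\sin^{2}\theta=2H\cos^{2}\theta+k_n(a_t)\sin^{2}\theta .
\]
Therefore the characteristic equation is $\cos^{2}\theta\,(k^{*})^{2}+2\cos\theta\,(2H\cos^{2}\theta+k_n(a_t)\sin^{2}\theta)\,k^{*}+4\cos^{2}\theta\,K=0$, and dividing by $\cos^{2}\theta$ — legitimate since $(\mathbf{a},\mathbf{n})<0$ forces $\cos\theta\neq0$ — yields exactly $(k^{*})^{2}+2\cos\theta\,(2H+k_n(a_t)\tan^{2}\theta)\,k^{*}+4K=0$.

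Finally, to see that there really are two caustics, I would note that $g_{ij}^{*}$ is positive definite: $g^{*}(X,X)=|X|^{2}-(X,a_t)^{2}\ge|X|^{2}(1-|a_t|^{2})=|X|^{2}\cos^{2}\theta>0$ for $X\neq0$ (and $\theta\neq\pi/2$, since $(\mathbf{a},\mathbf{n})<0$). A symmetric form $B^{*}$ relative to a positive-definite form has two real generalized eigenvalues $k_1^{*},k_2^{*}$, possibly equal, and since $k_1^{*}k_2^{*}=4K\neq0$ both are nonzero, so the points $\mathbf{r}+\frac{1}{k_i^{*}}\mathbf{b}$ are well defined. I do not anticipate any conceptual obstacle: the work is the $2\times2$ determinant expansion, and the one place where care is required is tracking which quadratic-form evaluations contribute $\cos^{2}\theta$ and which contribute $\sin^{2}\theta$, and remembering that what enters the linear term is the normal curvature $k_n(a_t)=B(a_t,a_t)/|a_t|^{2}$, not $B(a_t,a_t)$ itself.
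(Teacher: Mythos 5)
Your proposal is correct and follows essentially the same route as the paper: both reduce to the focal set of the virtual surface $(g^{*},B^{*})$ from Lemma 1.1, work in a frame adapted to the principal directions, and identify the coefficients via $k_1+k_2=2H$, $k_1k_2=K$ and $B(a_t,a_t)=k_n(a_t)\sin^{2}\theta$; your use of $\det(B^{*}-k^{*}g^{*})=0$ in an orthonormal frame is just a cosmetic variant of the paper's explicit Weingarten matrix $A^{*}=(g^{*})^{-1}B^{*}$ in curvature coordinates. Your added observation that $g^{*}$ is positive definite (hence the roots are real) is a small point the paper leaves implicit.
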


\begin{proof} Introduce on the surface the curvature coordinates $\left(u^{1}, u^{2}\right)$. Then
$$
g=\left(\begin{array}{cc}
g_{11} & 0 \\
0 & g_{22}
\end{array}\right), \quad B=\left(\begin{array}{cc}
k_{1} g_{11} & 0 \\
0 & k_{2} g_{22}
\end{array}\right)
$$
where $k_{1}$ and $k_{2}$ are the principal curvatures of the surface $S$. Introduce the orthonormal frame
\begin{equation*}
\mathbf{e}_{1}=\frac{1}{\sqrt{g_{11}}} \partial_{1} \mathbf{r}, \quad \mathbf{e}_{2}=\frac{1}{\sqrt{g_{22}}} \partial_{2} \mathbf{r}, \quad \mathbf{e}_{3}=\mathbf{n} \tag{1.8}
\end{equation*}
Decompose $\mathbf{a}=a_{1} \mathbf{e}_{1}+a_{2} \mathbf{e}_{2}+a_{3} \mathbf{e}_{3} \quad\left(a_{1}^{2}+a_{2}^{2}+a_{3}^{2}=1\right)$. We have
$$
B^{*}=-2 a_{3}\left(\begin{array}{cc}
k_{1} g_{11} & 0 \\
0 & k_{2} g_{22}
\end{array}\right), \quad \quad g^{*}=\left(\begin{array}{cc}
g_{11}\left(1-a_{1}^{2}\right) & -\sqrt{g_{11} g_{22}} \,a_{1} a_{2} \\
-\sqrt{g_{11} g_{22}}\, a_{1} a_{2} & g_{22}\left(1-a_{2}^{2}\right)
\end{array}\right)
$$
Then, evidently,
$$
\operatorname{det} g^{*}=g_{11} g_{22}\left(1-a_{1}^{2}-a_{2}^{2}\right)=g_{11} g_{22} a_{3}^{2} .
$$
The Weingarten matrix is
$$
A^{*}=-\frac{2}{a_{3}}\left(\begin{array}{cc}
\left(1-a_{2}^{2}\right) k_{1} & a_{1} a_{2} k_{2} \sqrt{\frac{g_{22}}{g_{11}}} \\
a_{1} a_{2} k_{1} \sqrt{\frac{g_{11}}{g_{22}}} & \left(1-a_{1}^{2}\right) k_{2}
\end{array}\right)
$$
Thus, the characteristic equation on $k_{i}^{*}$ takes the form
$$
\left(k^{*}\right)^{2}+\frac{2}{a_{3}}\left(\left(a_{1}^{2}+a_{3}^{2}\right) k_{1}+\left(a_{2}^{2}+a_{3}^{2}\right) k_{2}\right) k^{*}+4 k_{1} k_{2}=0
$$
or
$$
\left(k^{*}\right)^{2}+2 a_{3}\left(k_{1}+k_{2}+\frac{a_{1}^{2} k_{1}+a_{2}^{2} k_{2}}{a_{3}^{2}}\right) k^{*}+4 k_{1} k_{2}=0
$$
It remains to notice that $k_{1}+k_{2}=2 H, k_{1} k_{2}=K, a_{1}^{2} k_{1}+a_{2}^{2} k_{2}=B\left(a_{t}, a_{t}\right)=k_{n}\left(a_{t}\right) g\left(a_{t}, a_{t}\right)=k_{n}\left(a_{t}\right) \sin ^{2} \theta$ and $a_{3}=\cos \theta$. As a result, we obtain
$$
\left(k^{*}\right)^{2}+2 \cos \theta\left(2 H+k_{n}\left(a_{t}\right) \tan ^{2} \theta\right) k^{*}+4 K=0 .
$$
If we denote by $k_{i}^{*}$ the roots, then the equation of caustic of the reflected front takes the form
$$
\boldsymbol\xi_{i}^{*}=\mathbf{r}+\frac{1}{k_{i}^{*}} \mathbf{b} .
$$
Evidently, $k_{1}^{*} k_{2}^{*}=4 K$ and hence, if $K \neq 0$, then both of caustics exit.
\end{proof}

\textbf{Example 1.3.} The sphere of radius 1 . We have $k_{1}=k_{2}=1, K=1, H=1, k_{n}\left(a_{t}\right)=1$ and hence the equation on $k^{*}$ takes the form

$$
\left(k^{*}\right)^{2}+2\left(\cos \theta+\frac{1}{\cos \theta}\right) k^{*}+4=0
$$
with evident solutions $k_{1}^{*}=-2 \cos \theta, \quad k_{2}^{*}=-\frac{2}{\cos \theta}$ So, the parametric equations of caustics of the reflected front take the forms
$$
\boldsymbol\xi_{1}^{*}=\mathbf{r}-\frac{1}{2 \cos \theta} \mathbf{b}, \quad \boldsymbol\xi_{2}^{*}=\mathbf{r}-\frac{\cos \theta}{2} \mathbf{b} .
$$
Take a local parameterization for the sphere as $\mathbf{r}=\{\cos u \cos v, \cos u \sin v, \sin u\}$ and suppose $\mathbf{a}=\{0,0,1\}$. Then $\cos \theta=(\mathbf{a}, \mathbf{n})=-\sin u$ where $u \in(0, \pi / 2)$. Thus, we have two caustics
$$
\boldsymbol\xi_{1}^{*}=\left\{0,0, \frac{1}{2 \sin u}\right\}, \quad \boldsymbol\xi_{2}^{*}=\left\{\cos ^{3} u \cos v, \cos ^{3} u \sin v, \frac{\sin u}{2}\left(2 \cos ^{2} u+1\right)\right\}
$$
The first caustic degenerates into a straight line, the second one is a surface of revolution generated by the caustic of plane circle (see Figure 1.1)
\begin{center}
\includegraphics[max width=0.65\textwidth]{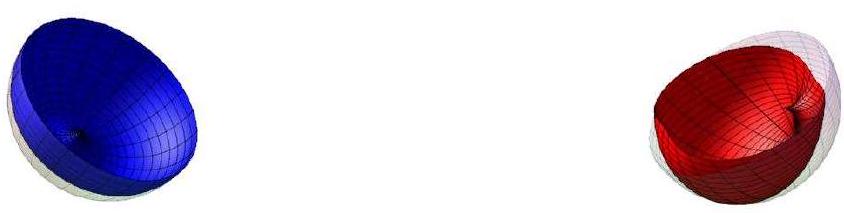}
\end{center}
\textbf{Figure 1.1:} \emph{Caustic of the hemi-sphere when the rays are parallel to the hemi-sphere axis of symmetry (left) and non parallel to this axis (right)}\\[2ex]

\textbf{Example 1.4.} General surface of revolution. We have

$$
\mathbf{r}=\{x(u) \cos (v), x(u) \sin (v), z(u)\}, \quad\left(\left(x^{\prime}\right)^{2}+\left(z^{\prime}\right)^{2}=1\right)
$$
and let $\mathbf{a}=\{0,0,1\}$. Then
$$
\begin{aligned}
& \mathbf{e}_{1}=\left\{x^{\prime} \cos v, x^{\prime} \sin v, z^{\prime}\right\}, \\
& \mathbf{e}_{2}=\{-\sin v, \cos v, 0\}, \\
& \mathbf{e}_{3}=\left\{-z^{\prime} \cos v,-z^{\prime} \sin v, x^{\prime}\right\}
\end{aligned}
$$
and hence $\mathbf{a}=z^{\prime} \mathbf{e}_{1}+x^{\prime} \mathbf{e}_{3}$, i.e. $a_{1}=z^{\prime}, a_{2}=0, a_{3}=x^{\prime}$. The equation on $k^{*}$ takes the form
$$
\left(k^{*}\right)^{2}+2\left(\frac{k_{1}}{a_{3}}+a_{3} k_{2}\right) k^{*}+4 k_{1} k_{2}=0
$$
with evident solutions $k_{1}^{*}=-\frac{2 k_{1}}{a_{3}}, \quad k_{2}^{*}=-2 a_{3} k_{2}$. The caustics are
$$
\boldsymbol\xi_{1}^{*}=r-\frac{a_{3}}{2 k_{1}} \mathbf{b}, \quad \boldsymbol\xi_{2}^{*}=r-\frac{1}{2 k_{2} a_{3}} \mathbf{b}
$$
where
$$
\mathbf{b}=\mathbf{a}-2 a_{3} \mathbf{n}=z^{\prime} \mathbf{e}_{1}-x^{\prime} \mathbf{e}_{3}=\left\{2 x^{\prime} z^{\prime} \cos v, 2 x^{\prime} z^{\prime} \sin v,\left(z^{\prime}\right)^{2}-\left(x^{\prime}\right)^{2}\right\} .
$$
So we have
$$
\boldsymbol\xi_{1}^{*}=\left\{\left(x-\frac{\left(x^{\prime}\right)^{2} z^{\prime}}{\left(z^{\prime \prime} x^{\prime}-z^{\prime} x^{\prime \prime}\right)}\right) \cos v,\left(x-\frac{\left(x^{\prime}\right)^{2} z^{\prime}}{\left(z^{\prime \prime} x^{\prime}-z^{\prime} x^{\prime \prime}\right)}\right) \sin v, z-\frac{x^{\prime}\left(\left(z^{\prime}\right)^{2}-\left(x^{\prime}\right)^{2}\right)}{2\left(z^{\prime \prime} x^{\prime}-z^{\prime} x^{\prime \prime}\right)}\right\}, $$
$$
\boldsymbol \xi_{2}^{*}=\left\{0,0, z-\frac{x\left(\left(z^{\prime}\right)^{2}-\left(x^{\prime}\right)^{2}\right)}{2 x^{\prime} z^{\prime}}\right\}
$$
The $\boldsymbol\xi_{1}^{*}$ is a surface of revolution generated by caustic of curve on a plane in case when incident rays are parallel to the axis of revolution.
The caustic $\boldsymbol\xi_{2}^{*}$ is the degenerated one.\\[1ex]

\textbf{Example 1.5. }Translation surface. Let us be given a translation surface $\mathbf{r}=\{x, y, f(x)+h(y)\}$ and suppose $\mathbf{a}=\{0,0,1\}$. Then
$$
\mathbf{n}=-\frac{\partial_{x} \mathbf{r} \times \partial_{y} \mathbf{r}}{\left|\partial_{x} \mathbf{r} \times \partial_{y} \mathbf{r}\right|}=\frac{1}{\sqrt{1+f_{x}^{2}+h_{y}^{2}}}\left\{f_{x}, h_{y},-1\right\}
$$
and
$$
\mathbf{b}=\frac{1}{1+f_{x}^{2}+h_{y}^{2}}\left\{2 f_{x}, 2 h_{y},-1+f_{x}^{2}+h_{y}^{2}\right\} .
$$
A direct computation show that in this case
$$
g^{*}=\left(\begin{array}{ll}
1 & 0 \\
0 & 1
\end{array}\right), \quad B^{*}=\frac{2}{1+f_{x}^{2}+h_{y}^{2}}\left(\begin{array}{cc}
-f_{x x} & 0 \\
0 & -h_{y y}
\end{array}\right)
$$
and hence
$$
\frac{1}{k_{1}^{*}}=-\frac{1+f_{x}^{2}+h_{y}^{2}}{2 f_{x x}}, \quad \frac{1}{k_{2}^{*}}=-\frac{1+f_{x}^{2}+h_{y}^{2}}{2 h_{y y}}
$$
Therefore, the equations of caustics take the forms
$$
\begin{aligned}
& \boldsymbol\xi_{1}^{*}=\{x, y, f(x)+h(y)\}-\frac{1}{2 f_{x x}}\left\{2 f_{x}, 2 h_{y},-1+f_{x}^{2}+h_{y}^{2}\right\} \\
&\boldsymbol \xi_{2}^{*}=\{x, y, f(x)+h(y)\}-\frac{1}{2 h_{y y}}\left\{2 f_{x}, 2 h_{y},-1+f_{x}^{2}+h_{y}^{2}\right\}
\end{aligned}
$$
In partial case of the hyperbolic paraboloid $\mathbf{r}=\left\{x, y, \frac{1}{2} x^{2}-\frac{1}{2} y^{2}\right\}$ we have $f_{x}=x, f_{x x}=1, h_{y}=-y, h_{y y}=-1$ and after simplifications we get
$$
\boldsymbol\xi_{1}^{*}=\left\{0,2 y, \frac{1}{2}-y^{2}\right\}, \quad \boldsymbol\xi_{1}^{*}=\left\{2 x, 0, x^{2}-\frac{1}{2}\right\}
$$
The caustics degenerate into two parabolas (See Figure 1.2). In partial case of elliptic paraboloid $\mathbf{r}=\left\{x, y, \frac{1}{2} x^{2}+\frac{1}{2} y^{2}\right\}$ the both caustics degenerate into one point $\left(0,0, \frac{1}{2}\right)$ (see Figure 1.3 for the other cases)\\
\includegraphics[max width=0.65\textwidth, center]{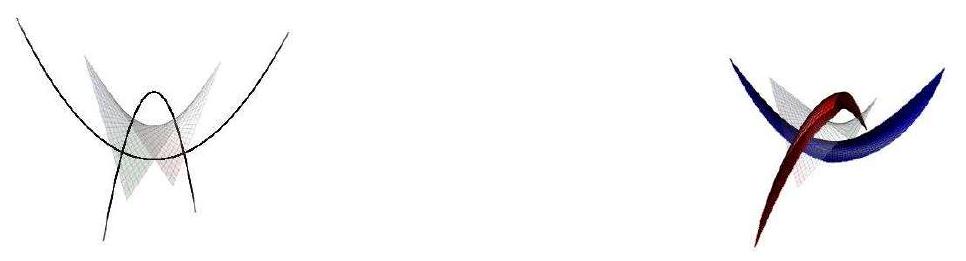}

\textbf{Figure 1.2:}\emph{ Reflected caustics of hyperbolic paraboloid in case of incident rays parallel to the axis of symmetry (left) and non-parallel to this axis (right)}\\[1ex]

\begin{center}
\includegraphics[max width=0.65\textwidth]{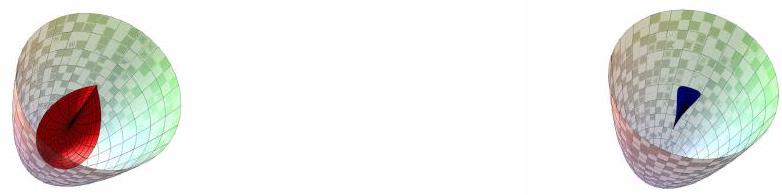}
\end{center}
\textbf{Figure 1.3:} \emph{Two reflected caustics of elliptic paraboloid in case of incident rays parallel non-parallel to the axis of symmetry
respectively.} \\[1ex]

\textbf{Example 1.6.} Cylinder over curve in a plane. Consider a cylinder based on a naturally parameterized curve $\gamma(s)$ in $x O y$ plane and suppose the rulings are directed along the $O z$ axis. Then the parametric equation of the cylinder is $\mathbf{r}=\gamma+t \mathbf{e}_{3}$. Denote by $\boldsymbol\tau=\gamma_{s}^{\prime}$ and $\boldsymbol\nu$ the Frenet frame of the curve. The orthonormal tangent frame of the surface consists of $\boldsymbol\tau$ and $\mathbf{e}_{3}$. The unit normal vector field of the surface is $\boldsymbol\nu$. Suppose $\mathbf{a} \perp \mathbf{e}_{3}$. Then the decomposition of $\mathbf{a}$ with respect to the frame $\boldsymbol\tau, \mathbf{e}_{3}, \boldsymbol\nu$ takes the form
$$
\mathbf{a}=\sin \theta \boldsymbol\tau+\cos \theta \boldsymbol\nu.
$$
The principal curvatures of this kind of cylinder are $k_{1}=k(s)$ and $k_{2}=0$, where $k(s)$ is the curvature of $\gamma$. Hence $K=0,2 H=k(s)$. In addition, the first and second fundamental forms are
$$
g=\left(\begin{array}{ll}
1 & 0 \\
0 & 1
\end{array}\right), \quad B=\left(\begin{array}{cc}
k(s) & 0 \\
0 & 0
\end{array}\right)
$$
Thus we have the following equation on $k^{*}$
$$
\left(k^{*}\right)^{2}+2\left(k(s) \cos \theta+\frac{1}{\cos \theta} k(s) \sin ^{2} \theta\right) k^{*}=0 .
$$
We have two solutions\\
(1) $k^{*}=0$,\\
(2) $k^{*}=-\frac{2 k(s)}{\cos \theta}$.

The solution (1) gives rise to "caustic" at infinity. The solution (2) gives rise to cylinder over caustic of the curve $\gamma$
$$
\boldsymbol\xi^{*}=\mathbf{r}-\frac{\cos \theta}{2 k(s)}(\sin \theta \boldsymbol\tau-\cos \theta \boldsymbol \nu)=\mathbf{r}+\frac{(\mathbf{a}, \boldsymbol \nu)}{2 k(s)}(-(\mathbf{a}, \boldsymbol\tau) \boldsymbol\tau+(\mathbf{a},\boldsymbol \nu) \boldsymbol \nu),
$$
which is the same as one can find in textbooks (see, e.g. [10], p. 109).

\section{ Caustics by reflection in case of spherical incident front.}
In this section we consider the case when the source of the rays is located at the origin.\\
\begin{theorem} Let $S$ be a regular surface parameterized by position-vector $\mathbf{r}$ and $\mathbf{n}$ be a the unit normal vector field of $S$. Denote by $\mathbf{a}=\frac{\mathbf{r}}{r}(r=|\mathbf{r}|)$ a direction of incident rays. Denote by $k_{i}^{*}(i=1,2)$ the solutions of
$$
\left(k^{*}+\frac{1}{r}\right)^{2}+2 \cos \theta\left(2 H+k_{n}\left(a_{t}\right) \tan ^{2} \theta\right)\left(k^{*}+\frac{1}{r}\right)+4 K=0
$$
where $H$ is the mean curvature, $K$ is the Gaussian curvature and $k_{n}\left(a_{t}\right)$ is the normal curvature of the surface in a direction of tangential projection of the incident rays. Then, over each local domain where $k_{i}^{*} \neq 0$, the parametric equations of caustics of the reflected front can be given by
$$
\boldsymbol\xi_{i}^{*}=\mathbf{r}+\frac{1}{k_{i}^{*}} \mathbf{b}
$$
where $\mathbf{b}=\mathbf{a}-2(\mathbf{a}, \mathbf{n}) \mathbf{n}$ is a direction of the reflected rays.\\
\end{theorem}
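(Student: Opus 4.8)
The plan is to follow the two-step strategy of Section~1. Write the reflected front as $\boldsymbol\rho=\mathbf r+\lambda\mathbf b$ with $\mathbf b$ given by (1.1) and now $\mathbf a=\mathbf r/r$. A photon leaving the origin meets the surface at $\mathbf r$ after travelling the distance $r=|\mathbf r|$ and then travels the distance $\lambda$, so $\lambda=L-r$ for the common path length $L$, which is (1.2) with $(\mathbf r,\mathbf a)$ replaced by $r$. The identity that makes the whole reduction work is
\[
(\partial_i\mathbf r,\mathbf a)=\tfrac1r(\partial_i\mathbf r,\mathbf r)=\partial_i r ,
\]
so that $\partial_i\lambda=-\partial_i r=-(\partial_i\mathbf r,\mathbf a)$ --- formally the same relation as in Section~1. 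Hence $(\partial_i\boldsymbol\rho,\mathbf b)=(\partial_i\mathbf r,\mathbf b)+\partial_i\lambda=(\partial_i\mathbf r,\mathbf a)-(\partial_i\mathbf r,\mathbf a)=0$, so $\mathbf b$ is the unit normal of the reflected front, and the computation (1.7) in the proof of Lemma~1.1 applies verbatim: $(\partial_X\boldsymbol\rho,\partial_X\mathbf b,\mathbf b)=(\partial_X\mathbf r,\partial_X\mathbf b,\mathbf b)$ does not contain $\lambda$, so the caustic is the focal set of $\boldsymbol\rho$ at the instant $\lambda=0$, where $\boldsymbol\rho=\mathbf r$.

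Next I would compute the two fundamental forms of $\boldsymbol\rho$ at $\lambda=0$. The metric is identical to the one in (1.4), namely $g^*_{ij}=g_{ij}-(\partial_i\mathbf r,\mathbf a)(\partial_j\mathbf r,\mathbf a)$, since its only input was $\partial_i\lambda=-(\partial_i\mathbf r,\mathbf a)$. The new contribution sits in $B^*_{ij}=-(\partial_i\boldsymbol\rho,\partial_j\mathbf b)$: because $\mathbf a$ is no longer constant, $\partial_j\mathbf b=\partial_j\mathbf a-2(\partial_j\mathbf a,\mathbf n)\mathbf n-2(\mathbf a,\partial_j\mathbf n)\mathbf n-2(\mathbf a,\mathbf n)\partial_j\mathbf n$, and since $(\partial_i\mathbf r,\mathbf n)=0$ only the $\partial_j\mathbf a$ piece survives the pairing with $\partial_i\mathbf r$. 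Using $\partial_j\mathbf a=r^{-1}\bigl(\partial_j\mathbf r-(\partial_j r)\mathbf a\bigr)$ together with the identity above,
\[
(\partial_i\mathbf r,\partial_j\mathbf a)=r^{-1}\bigl(g_{ij}-(\partial_i\mathbf r,\mathbf a)(\partial_j\mathbf r,\mathbf a)\bigr)=r^{-1}g^*_{ij},
\]
so that at $\lambda=0$ one gets $B^*_{ij}=-2\cos\theta\,B_{ij}-\tfrac1r\,g^*_{ij}$. I expect this to be the one place that needs real care: the point is that the non-constancy of $\mathbf a$ contributes precisely a $-\tfrac1r$ multiple of the modified metric $g^*_{ij}$, and nothing else, which is what produces the clean shift below.

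The remainder is algebra along the lines of Theorem~1.1. The principal curvatures $k^*_i$ of the reflected front solve $\det\bigl(B^*_{ij}-k^*g^*_{ij}\bigr)=0$, i.e.
\[
\det\Bigl(-2\cos\theta\,B_{ij}-\bigl(k^*+\tfrac1r\bigr)g^*_{ij}\Bigr)=0 ,
\]
which is exactly the equation analysed in the proof of Theorem~1.1 with $k^*$ replaced by $k^*+\tfrac1r$; making that replacement in its conclusion gives
\[
\Bigl(k^*+\tfrac1r\Bigr)^2+2\cos\theta\Bigl(2H+k_n(a_t)\tan^2\theta\Bigr)\Bigl(k^*+\tfrac1r\Bigr)+4K=0 .
\]
Finally, since $\boldsymbol\rho=\mathbf r$ at $\lambda=0$, the centres of principal curvature of the reflected front are $\boldsymbol\xi^*_i=\mathbf r+\tfrac1{k^*_i}\mathbf b$, which is the asserted parametrisation wherever $k^*_i\neq0$; letting $r\to\infty$ recovers Theorem~1.1, a reassuring consistency check.
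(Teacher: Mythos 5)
Your proposal is correct and follows essentially the same route as the paper: reduce to $\lambda=0$ via Lemma 1.1, show that the only new contribution is the term $-\tfrac1r g^*_{ij}$ in $B^*_{ij}$ coming from $\partial_j\mathbf a$, and read off the characteristic equation as the flat-front equation shifted by $\tfrac1r$. You actually supply two details the paper leaves implicit --- the verification that the reduction of Lemma 1.1 still applies when $\mathbf a$ is non-constant, and the explicit computation behind the paper's ``after computations we get (2.1)'' --- so nothing is missing.
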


\begin{proof} By Lemma 1.1, we can restrict our research to the case $\lambda=0$. Denote $r=|\mathbf{r}|$. After computations we get
\begin{equation*}
g_{i j}^{*}=g_{i j}-\left(\partial_{j} \mathbf{r}, \mathbf{a}\right)\left(\partial_{i} \mathbf{r}, \mathbf{a}\right), \  B_{i j}^{*}=-2 \cos \theta B_{i j}-\left(\partial_{i} \mathbf{r}, \partial_{j} \mathbf{b}\right)=-2 \cos \theta B_{i j}-\frac{1}{r} g_{i j}^{*} \tag{2.1}
\end{equation*}
This means that in this case the Weingarten matrix $W^{*}$ takes the form
$$
W^{*}=A^{*}-\frac{1}{r} E
$$
where $A^{*}$ is a matrix of the same structure as the Weingarten matrix of the flat front. The equation on $k^{*}$ take the form
$$
\operatorname{det}\left(A^{*}-\left(k^{*}+\frac{1}{r}\right) E\right)=0
$$
or
\begin{equation*}
\left(k^{*}+\frac{1}{r}\right)^{2}+2 \cos \theta\left(2 H+k_{n}\left(a_{t}\right) \tan ^{2} \theta\right)\left(k^{*}+\frac{1}{r}\right)+4 K=0 \tag{2.2}
\end{equation*}
If we denote by $k_{i}^{*}$ the solution for (2.2), then the parametric equation of the caustic takes the form
$$
\boldsymbol\xi_{i}^{*}=\mathbf{r}+\frac{1}{k_{i}^{*}} \mathbf{b}
$$
provided that $k_{i}^{*} \neq 0$ over a local domain.
\end{proof}

\textbf{Example 2.2. }The sphere of radius 1 centered at the origin. Suppose the emitting point is at the origin and hence

$$
r=1, \mathbf{a}=\mathbf{r}=-\mathbf{n}, \quad \theta=\pi, H=K=1
$$

Then the equation on $k^{*}$ takes the form $\left(k^{*}+1\right)^{2}-4\left(k^{*}+1\right)+4=0$, i.e. $k^{*}=1$ and hence $\xi^{*}=\mathbf{r}+\mathbf{n}=0$.\\
\includegraphics[max width=0.65\textwidth, center]{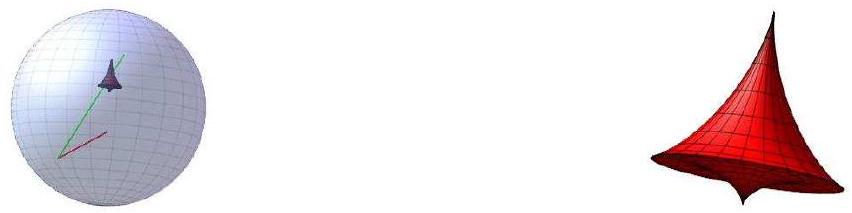}

\textbf{Figure 2.1:}\emph{ Caustic inside the sphere when the source is located not far from the center (left). This caustic itself (right)}\\

\includegraphics[max width=0.65\textwidth, center]{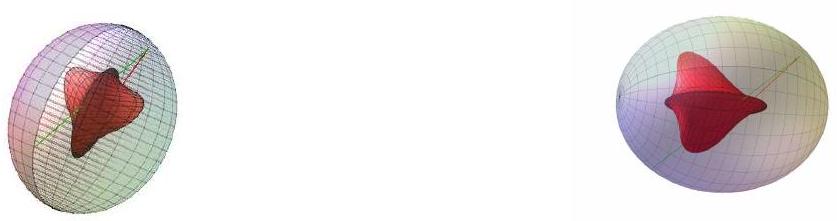}

\textbf{Figure 2.2:} \emph{Caustic inside the three-axis ellipsoid when the source is located not far from the center of symmetry}\\[1ex]

If the source does not located at the center, the caustic of reflected front has rather complicated structure, see Figure 2.1.  Inside three-axis ellipsoid in case of source does not located at the center of symmetry (but not too far from it), the caustic can be seen on Figure 2.2.\\[1ex]

\textbf{Example 2.3. } Cylinder over curve in a plane. Consider a cylinder based on a naturally parameterized curve $\gamma({s})$ in $x O y$ plane and suppose the rulings are directed along the $O z$ axis. Then the parametric equation of the cylinder is $\mathbf{r}=\gamma+t \mathbf{e}_{3}$. Denote by $\boldsymbol\tau=\gamma_{s}^{\prime}$ and $\boldsymbol\nu$ the Frenet frame of the curve. The orthonormal tangent frame of the surface consists of $\boldsymbol\tau$ and $\mathbf{e}_{3}$. The unit normal vector field of the surface is $\boldsymbol\nu$. Denote $r=|\mathbf{r}|$ and $\mathbf{a}=\frac{\mathbf{r}}{r}$. Then the decomposition of $\mathbf{a}$ with respect to the frame $\boldsymbol\tau, \mathbf{e}_{3}, \boldsymbol\nu$ takes the form
$$
\mathbf{a}=\sin \theta \cos \alpha \boldsymbol\tau+\sin \theta \sin \alpha \, \mathbf{e}_3+\cos \theta \boldsymbol\nu
$$
As it well known, the principal curvatures of this kind of a cylinder are $k_{1}=k(s)$ and $k_{2}=0$, where $k(s)$ is the curvature of $\gamma$. Hence $K=0,2 H=k(s)$. In addition, the first and second fundamental forms are
$$
g=\left(\begin{array}{cc}
1 & 0 \\
0 & 1
\end{array}\right) \quad \text { and } \quad B=\left(\begin{array}{cc}
k(s) & 0 \\
0 & 0
\end{array}\right)
$$
respectively. Thus we have the equation on $k^{*}$ of the following form
$$
\left(k^{*}+\frac{1}{r}\right)^{2}+2\left(k(s) \cos \theta+\frac{1}{\cos \theta} k(s) \sin ^{2} \theta \cos ^{2} \alpha\right)\left(k^{*}+\frac{1}{r}\right)=0
$$
We have two solutions
$$
\text { (1) } k^{*}=-\frac{1}{r} \quad \text { (2) } k^{*}=-\frac{1}{r}-\frac{2 k(s)}{\cos \theta}\left(\cos ^{2} \theta+\sin ^{2} \theta \cos ^{2} \alpha\right)
$$
The cross-section $\alpha=0$ is the caustic of rays reflected by the curve $\gamma$ in $x O y$-plane. The equation takes the form
$$
\boldsymbol\xi^{*}=\mathbf{r}-\frac{\cos \theta}{\frac{\cos \theta}{r}+2 k(s)}(\sin \theta \boldsymbol\tau-\cos \theta\boldsymbol\nu)=\mathbf{r}+\frac{(\mathbf{r}, v)}{(\mathbf{r}, v)+2 r^{2} k(s)}(-(\mathbf{r}, \boldsymbol\tau) \boldsymbol\tau+(\mathbf{r},\boldsymbol\nu)\boldsymbol\nu)
$$
which is the same as one can find in textbooks (see, e.g. \cite{10}, p. 109).

\section{Conclusion}
In this paper we have found exact and simple parameterization of caustics of reflected wave fronts by using purely geometric approach. Now we know which geometric characteristic of a mirror surface define geometry of the caustic, how the location of the surface with respect to incident pencil of rays changes the caustic. As a byproduct, the results allow to get fast and exact formulas for computer simulation of caustics of reflected fronts that can be used both in geometry and engineering. The examples approve the calculations and have nice visual representations.

\end{document}